\documentclass[12pt]{amsart}
\usepackage{wrapfig}
\usepackage{graphicx}
\usepackage[all]{xy}
\setlength{\oddsidemargin}{-0.25in} \setlength{\evensidemargin}{-0.25in} 
\setlength{\topmargin}{-.25in}
\setlength{\headheight}{0in} \setlength{\headsep}{.25in}
\setlength{\topskip}{0in} \setlength{\textwidth}{7in}
\setlength{\textheight}{9.25in}
\usepackage{amsmath,amsthm,amssymb}
\usepackage{mathabx,epsfig}

\usepackage[margin=1in]{geometry}
\usepackage{amsmath,amscd}
\usepackage{relsize}
\usepackage{color}
\usepackage{mathtools}

\usepackage[all]{xy}

\usepackage{thmtools}
\usepackage{thm-restate}
\usepackage{hyperref}
\usepackage{cleveref}

\newtheorem{theorem}{Theorem}[section]
\newtheorem{conjecture}[theorem]{Conjecture}

\newtheorem{lemma}[theorem]{Lemma}
\newtheorem{proposition}[theorem]{Proposition}
\usepackage[small,compact]{titlesec}
\usepackage{enumerate}
\usepackage{enumitem}
\usepackage{times}

\makeatletter

\newcommand{\Rmnum}[1]{\expandafter\@slowromancap\romannumeral #1@}
\makeatother

  \newcommand{\PGL}{\operatorname{PGL}}

 \newcommand{\Pic}{\operatorname{Pic}}

 \newcommand{\PrePer}{\operatorname{PrePer}}

 \newcommand{\Orb}{\operatorname{Orb}}

  \newcommand{\Res}{\operatorname{Res}}
 \newcommand{\Rat}{\operatorname{Rat}}
\newcommand{\Avg}{\operatorname{Avg}}

\usepackage{amssymb,fge}
\newcommand{\mysetminus}{\mathbin{\fgebackslash}}

\usepackage{xcolor}
\usepackage{titlesec}
\titleformat{\section}[block]{\color{black}\large\filcenter}{}{1em}{}
\titleformat{\subsection}[hang]{\bfseries}{}{1em}{}
\setcounter{secnumdepth}{1}

\theoremstyle{remark}
 
\newtheorem{remark}[theorem]{Remark}
\begin{document}
\title{The average number of integral points in orbits}
\author{Wade Hindes}
\dedicatory{Dedicated to Joseph H. Silverman on the occasion
     of his $60$th birthday.}
\date{\today}
\maketitle
\renewcommand{\thefootnote}{}
\footnote{2010 \emph{Mathematics Subject Classification}: Primary: 37P15. Secondary: 37P45, 11B37, 14G05, 37P55, 11G99.}
\footnote{\emph{Key words and phrases}: Arithmetic dynamics, integral points.}
\begin{abstract} Over a number field $K$, a celebrated result of Silverman states that if $\varphi(z)\in K(z)$ is a rational function whose second iterate is not a polynomial, the set of $S$-integral points in the orbit $\Orb_\varphi(P)=\{\varphi^n(P)\}_{n\geq0}$ is finite for all $P\in \mathbb{P}^1(K)$. In this paper, we show that if we vary $\varphi$ and $P$ in a suitable family, the number of $S$-integral points in $\Orb_\varphi(P)$ is absolutely bounded. In particular, if we fix $\varphi$ and vary the basepoint $P\in \mathbb{P}^1(K)$, then we show that $\#(\Orb_\varphi(P)\cap\mathcal{O}_{K,S})$ is zero on average. Finally, we prove a zero-average result in general, assuming a standard height uniformity conjecture in arithmetic geometry.     
\end{abstract} 
\begin{section}{Introduction} 
Let $K/\mathbb{Q}$ be a number field and let $S$ be a finite set of places containing the archimedean ones. Let $\varphi(z)\in K(z)$ be a rational function of degree $d\geq2$, and let $\varphi^n$ denote the $n^{th}$ iterate of $\varphi$. If $\varphi^2$ is not a polynomial, then Silverman proved in \cite{Silv-Int} that the forward orbit
\begin{equation}{\label{orbitdef}} 
\Orb_\varphi(P):=\{P,\varphi(P),\varphi^2(P),\dots\}
\end{equation} 
contains only finitely many $S$-integral points for all $P\in\mathbb{P}^1(K)$. Moreover, Hsia and Silverman \cite{Silv-Hsia} gave an explicit bound on the number of $S$-integral points in $\Orb_\varphi(P)$, though it is normally much larger than the actual number. Nevertheless, there are rational maps (of every degree) with arbitrarily many integral points, illustrating the problem's subtlety \cite[Proposition 3.46]{Silv-Dyn}.  

On the other hand, one may hope to control $\#(\Orb_{\varphi}(P)\cap\mathcal{O}_{K,S})$ on average, that is, as we vary over $P\in\mathbb{P}^1(K)$. This point of view has yielded some powerful insight in other areas of number theory \cite{Avgintell, Avgrank, P-Stoll}, and we proceed with this approach here.

We begin by fixing some notation. A rational map $\varphi:\mathbb{P}^1\rightarrow\mathbb{P}^1$ of degree $d$ is defined by two homogenous polynomials 
\[\varphi=[F,G]=[a_dx^d+a_{d-1}x^{d-1}y+\dots +a_0y^d,\, b_dx^d+b_{d-1}x^{d-1}y+\dots +b_0y^d]\]
such that the resultant $\Res(F,G)\neq0$. In this way, a rational map is determined by a $(2d+2)$-tuple of numbers $(a_0,a_1,\dots,a_d,b_0,b_1,\dots b_d)$, well-defined up to scaling. In particular, we may identify the set of rational maps of degree $d$, denoted $\Rat_d$, as an open subset of $\mathbb{P}^{2d+1}$; see \cite[\S4.3]{Silv-Dyn}. Moreover, we define the height of $\varphi$, written $h(\varphi)$, to be its corresponding height in projective space \cite[\S3.1]{Silv-Dyn}.     

In this paper, we consider integral points in families $\phi:X\rightarrow\Rat_d$ of dynamical systems, where $X_{/K}$ is a projective variety and $\phi$ is a rational map defined over $K$. Specifically, if $X$ is equipped with a morphism $\beta:X\rightarrow\mathbb{P}^1$, then we study $(\Orb_{\phi_P}(\beta_P)\cap\mathcal{O}_{K,S})$, given by evaluating $\phi$ and $\beta$ at suitable points $P\in X(K)$.

\begin{remark} Let $K(X)$ denote the function field of $X$. Then we can view the family $(X,\phi,\beta)$ as given by a rational function $\phi(z)\in K(X)(z)$ of degree $d\geq2$ and a point $\beta\in K(X)$, and we will use both perspectives here.    
\end{remark} 

To make our notion of suitable points explicit, define the following subset of $X$:
\begin{equation}{\label{i}} I_{X,\phi}:=\big\{P\in X\;|\; \phi_P\in \Rat_d\;\text{is defined},\,\text{and}\; \phi_P^2\notin\widebar{K}[x]\big\}. 
\end{equation}
When $X$ is a curve, we prove that the quantity $\#(\Orb_{\phi_P}(\beta_P)\cap\mathcal{O}_{K,S})$ is uniformly bounded over all points $P\in I_{X,\phi}(K)$ and all morphisms $\beta:X\rightarrow\mathbb{P}^1$. However, as is often the case when studying families of dynamical systems, we must assume that the generic map $\phi(z)\in K(X)(z)$ is not \emph{isotrivial}: $\phi$ is isotrivial if there is a M\"{o}bius transformation $M\in\PGL_2(\overline{K(X)})$ so that $M^{-1}\circ\phi\circ M$ is defined over the constant field $\overline{K}$. 

In particular, given the freedom we have in choosing the basepoint family $\beta$, we have made progress towards a dynamical analog of a conjecture of Lang \cite[page 140]{Lang-Ellip} regarding the number of integral points on elliptic curves; see \cite[Conjecture 3.47]{Silv-Dyn}. \vspace{.2cm}      
\begin{theorem}
\label{thm:families}
Let $X_{/K}$ be a curve and let $\phi:X\rightarrow\Rat_d\subseteq\mathbb{P}^{2d+1}$ and $\beta:X\rightarrow\mathbb{P}^1$ be rational maps over $K$. If $\phi(z)\in K(X)(z)$ of degree $d\geq2$ is not isotrivial, then $\#(\Orb_{\phi_P}(\beta_P)\cap\mathcal{O}_{K,S})$ is uniformly bounded over all points $P\in I_{X,\phi}(K)$. \vspace{.2cm}  
\end{theorem} 
Of course, one expects that the actual number of integral points in a particular orbit is zero, provided that $X$ has sufficiently many rational points. To make this statement precise, we define a suitable notion of average. Given an ample height function $h_X$ on $X$ and a positive real number $B$, we write $I_{X,\phi}(B,K)$ for the set of points in $X(K)\cap I_{X,\phi}$ of height at most $B$. Then we define the \emph{average number of $S$-integral points in orbits in the family} $(X,\phi,\beta)$ to be \vspace{.2cm} 
\begin{equation}{\label{avgdef}} 
\widebar{\Avg}(\phi,\beta,S):=\limsup_{B\rightarrow \infty}\frac{\sum_{P\in I_{X,\phi}(B,K)}\#\big(\Orb_{\phi_P}(\beta_P)\cap\mathcal{O}_{K,S}\big)}{\#I_{X,\phi}(B,K)}, \vspace{.1cm}
\end{equation} 
and we show that $\widebar{\Avg}(\phi,\beta,S)=0$ for several families $(X,\phi,\beta)$ over number fields, including the case of constant families: $\phi:X\rightarrow\Rat_d$ with $\phi_P=\varphi$ for all $P\in X$. \vspace{.2cm}   
\begin{theorem}
\label{thm:2}
Let $\varphi(z)\in K(z)$ be such that $\deg(\varphi)\geq2$ and $\varphi^2(z)\notin \widebar{K}[z]$. If $X_{/K}$ is a curve of genus $g\geq1$ and $\beta:X\rightarrow\mathbb{P}^1$ is a non-constant map, then the set \vspace{.05cm}
\[\{P\in X(K)\;|\; \big(\Orb_\varphi(\beta_P)\cap\mathcal{O}_S\big)\neq\varnothing\}\vspace{.05cm}\] 
is finite. Moreover, if $g\leq1$ and $X(K)$ is infinite, then \vspace{.1cm}   
\begin{equation*} 
\widebar{\Avg}(\varphi,\beta,S):=\limsup_{B\rightarrow\infty}\frac{\sum_{P\in I_{X,\varphi}(B,K)}\#\big(\Orb_{\varphi}(\beta_P)\cap\mathcal{O}_{K,S}\big)}{\#I_{X,\varphi}(B,K)}=0\vspace{.15cm}   
\end{equation*}
In particular, if $X=\mathbb{P}^1$ and $\beta$ is the identity map, \vspace{.1cm}    
\[\widebar{\Avg}(\varphi,S):=\limsup_{B\rightarrow \infty}\frac{\sum_{P\in \mathbb{P}^1(B,K)}\#\big(\Orb_{\varphi}(P)\cap\mathcal{O}_{K,S}\big)}{\#\mathbb{P}^1(B,K)}=0.\vspace{.2cm} \] 
\end{theorem} 
For examples of non-constant families $(X,\phi,\beta)$ with $\widebar{\Avg}(\phi,\beta,S)=0$, see Proposition \ref{prop:example}. Moreover, assuming a standard height uniformity conjecture (see Conjecture \ref{conjecture} below) in arithmetic geometry, we prove an average-zero statement for all curves $X$ and all families $(X,\phi,\beta)$ as in Theorem \ref{thm:families}. In fact, for such families, Conjecture \ref{conjecture} implies that there is a largest iterate that can produce an integral point, a stronger statement than just bounding the number of such iterates: 
\begin{theorem}
\label{thm:conditional}
Let $X_{/K}$ be a curve and let $\phi:X\rightarrow\Rat_d\subseteq\mathbb{P}^{2d+1}$ and $\beta:X\rightarrow\mathbb{P}^1$ be rational maps over $K$. If $d\geq2$ and 
$(X,\phi,\beta)$ satisfies:  
\begin{enumerate}[topsep=8pt, partopsep=8pt, itemsep=10pt]  
\item[\textup{(1)}] $\phi^2(z)\not\in K(X)[z]$ (i.e. the second iterate of $\phi$ is not generically a polynomial),  
\item[\textup{(2)}] $\hat{h}_\phi(\beta)>0$,   
\end{enumerate} 
then $P\in I_{X,\phi}(K)$ and $\hat{h}_{\phi_P}(\beta_P)>0$ for all but finitely many points in $X(K)$, and Conjecture \ref{conjecture} implies that \vspace{.1cm}
\[N(\phi,\beta,S):=\sup\big\{n\,:\, \phi_P^n(\beta_P)\in\mathcal{O}_{K,S}\;\; \text{for some}\; P\in I_{X,\phi}(K),\; \hat{h}_{\phi_P}(\beta_P)>0\big\} \vspace{.05cm}\] 
is finite. Moreover, if $X(K)$ is infinite, then  
\begin{equation*} 
\widebar{\Avg}(\phi,\beta,S):=\limsup_{B\rightarrow \infty}\frac{\sum_{P\in I_{X,\phi}(B,K)}\#\big(\Orb_{\phi_P}(\beta_P)\cap\mathcal{O}_{K,S}\big)}{\#I_{X,\phi}(B,K)}=0\vspace{.1cm}.  
\end{equation*}  
\end{theorem} 
Finally, we outline how one might generalize these results to varieties of arbitrary dimension, and we illustrate this idea with an explicit $3$-dimensional family in Proposition \ref{prop:exam}.        
\end{section}  
\begin{section}{Integral points in orbits in families}
Throughout this section, let $X$ be curve, let $K$ be a number field, and let $d\geq2$. We use some properties of height functions on $X$ to prove that the quantity $\#(\Orb_{\phi_P}(\beta_P)\cap\mathcal{O}_{K,S})$ is bounded for all non-isotrivial families $(X,\phi,\beta)$. In particular, it follows that the average $\widebar{\Avg}(\phi,\beta,S)$ is also bounded for such families. To prove Theorem \ref{thm:families}, we use the following result due to Call and Silverman to estimate canonical heights; see \cite[Theorem 4.1]{Call-Silverman}.    
\begin{theorem}{\label{CallSilverman}} Let $X_{/K}$ be a curve, let $\phi:X\rightarrow\Rat_d$ and $\beta:X\rightarrow\mathbb{P}^1$ be rational maps over $K$ defined on $X_0\subseteq X$, and let $h_X$ be an ample height function on $X$. Then,  
\[\lim_{\substack{h_X(P)\rightarrow\infty\\ P\in X_0}} \frac{\hat{h}_{\phi_P}(\beta_P)}{h_X(P)}=\hat{h}_{\phi}(\beta).\] 
Here $\hat{h}_{\phi}(\beta)$ is defined by \cite[Theorem 3.20]{Silv-Dyn} using the Weil height on $K(X)$.  
\end{theorem}  
\begin{proof}[(Proof of Theorem \ref{thm:families})] As \cite[Corollary 17]{Silv-Hsia} suggests, we must bound the ratio $h_{\mathbb{P}^{2d+1}}(\phi_P)\big/\,\hat{h}_{\phi_P}(\beta_P)$ as we range over suitable points $P\in I_{X,\phi}(K)$, to bound the number of integral points in orbits. To do this, fix an ample height function $h_X$ on $X$ corresponding to a divisor of degree $1$ and let $H$ be a hyperplane class in $\Pic(\mathbb{P}^{2d+1})$. In particular, the normalization property of heights implies that 
\begin{equation}{\label{htbd1}} h_{\mathbb{P}^{2d+1},H}(Q)=h_{\mathbb{P}^{2d+1}}(Q)+O(1)
\end{equation}  
for all $Q\in\mathbb{P}^{2d+1}(\widebar{K})$; see, \cite[Theorem III.10.1(a)]{Silv-Advanced}. On the other hand, since the rational map $\phi:X\rightarrow\Rat_d$ extends to a morphism $\phi:X\rightarrow\mathbb{P}^{2d+1}$ of projective varieties \cite[II.2 Prop 2.1]{Silv-Arith}, the functorality of heights  \cite[Theorem III.10.1(d)]{Silv-Advanced} and (\ref{htbd1}) imply that 
\begin{equation}{\label{htbd2}}
h_{X,\,\phi^* H}(P)=h_{\mathbb{P}^{2d+1}}(\phi_P)+O(1)
\end{equation}
for all $P\in X(\overline{K})$. Now we compare the two height functions $h_X$ and $h_{X,\,\phi^* H}$ on the curve $X$ using \cite[Theorem III.10.2]{Silv-Advanced}: 
\begin{equation}{\label{htbd3}} \lim_{h_X(P)\rightarrow\infty}\frac{h_{X,\,\phi^* H}(P)}{h_X(P)}=\deg(\phi^*H). 
\end{equation} 
Here we use that $h_X$ was associated to a divisor of degree $1$. From here, we proceed in cases. Suppose first that $\hat{h}_\phi(\beta)>0$. Then Theorem \ref{CallSilverman}, (\ref{htbd2}) and (\ref{htbd3}) together imply that \vspace{.2cm}
\begin{equation*} 
\begin{split}
\;\lim_{\substack{h_X(P)\rightarrow\infty\\ P\in X_0}}\, \frac{h_{\mathbb{P}^{2d+1}}(\phi_P)}{\hat{h}_{\phi_P}(\beta_P)}&=\lim_{\substack{h_X(P)\rightarrow\infty\\ P\in X_0}} \,\frac{h_{X,\,\phi^* H}(P)+O(1)}{\hat{h}_{\phi_P}(\beta_P)}=\lim_{\substack{h_X(P)\rightarrow\infty\\ P\in X_0}} \,\frac{h_{X,\,\phi^* H}(P)+O(1)}{h_X(P)}\cdot \frac{h_X(P)}{\hat{h}_{\phi_P}(\beta_P)}\\[12pt]
&=\lim_{h_X(P)\rightarrow\infty} \,\frac{h_{X,\,\phi^* H}(P)+O(1)}{h_X(P)}\;\cdot\, \lim_{\substack{h_X(P)\rightarrow\infty\\ P\in X_0}}\,\frac{h_X(P)}{\hat{h}_{\phi_P}(\beta_P)}\,=\;\frac{\deg(\phi^*H)}{\hat{h}_\phi(\beta)}.
\end{split}  
\end{equation*} 
In particular, for all points $P\in X_0$ with $h_X(P)$ bigger than some fixed $\delta>0$, the ratio 
\begin{equation}{\label{htbd4}} 
\frac{h_{\mathbb{P}^{2d+1}}(\phi_P)}{\hat{h}_{\phi_P}(\beta_P)}\leq \frac{\deg(\phi^*H)}{\hat{h}_\phi(\beta)}+1
\end{equation} 
is bounded independently of $P$. In particular, (\ref{htbd4}) and \cite[Corollary 17]{Silv-Hsia} imply that there is a constant $\gamma=\gamma\big(d,[K:\mathbb{Q}]\big)$ such that \vspace{.05cm}
\begin{equation}{\label{bdint}} \#\big(\Orb_{\phi_P}(\beta_P)\cap\mathcal{O}_{K,S}\big)\leq 4^{\#S}\cdot\gamma+ \log_d^+\bigg(\frac{\deg(\phi^*H)}{\hat{h}_\phi(\beta)}+1\bigg) \vspace{.05cm}
\end{equation}
for all $P\in I_{X,\phi}(K)$ with $h_X(P)>\delta$. On the other hand, $h_X$ is ample, so that any set of $K$-points of $X$ of bounded height is finite. Hence,    
\[E_\delta:=\{P\in X(K)\,:\, h_X(P)\leq\delta\}\]
is a finite set. Therefore, if $P\in I_{X,\phi}(K)$ and $P\in E_\delta$, then $\#(\Orb_{\phi_P}(\beta_P)\cap\mathcal{O}_{K,S})$ is uniformly bounded by Silverman's original theorem; see \cite[Theorem 3.43]{Silv-Dyn} or \cite{Silv-Int} for the general statement. This fact, along with the bound in (\ref{bdint}), completes the proof of Theorem \ref{thm:families} when $\hat{h}_\phi(\beta)>0$.
 
On the other hand, if $\hat{h}_\phi(\beta)=0$, then \cite[Corollary 1.8]{Baker} implies that $\beta$ is preperiodic for $\phi$: there exist two (distinct) non-negative integers $n$ and $m$ such that $\phi^n(\beta)=\phi^m(\beta)$. In particular, this holds for every specialization, i.e. $\phi_P^n(\beta_P)=\phi_P^m(\beta_P)$ for all $P\in X$. Therefore, we see that $\#\Orb_{\phi_P}(\beta_P)\leq\max\{n,m\}$ for all $P\in X$. Hence, $\#(\Orb_{\phi_P}(\beta_P)\cap\mathcal{O}_{K,S})\leq\max\{n,m\}$ also, and we obtain a trivial bound in this case.              
\end{proof}  
\begin{remark} We note that the uniform bound in Theorem \ref{thm:families} need not hold for isotrivial families. To see this, we use Silverman's original ``clearing denominators" trick in \cite[Proposition 3.46]{Silv-Dyn}. Fix $\varphi(z)\in\mathbb{Q}(z)$ such that $\varphi^2(z)\not\in\mathbb{Q}[z]$ and $\hat{h}_\varphi(0)>0$; write $\varphi^n(0)=a_n/b_n$ for some $a_n, b_n\in\mathbb{Z}$. Moreover, set $B_N=\prod_{i=1}^N b_i$ for any $N\geq1$. Now let $X=\mathbb{P}^1$ and define the family $(X,\phi,\beta)$ given by $\phi_t(z)=t\cdot\varphi(z/t)$ and $\beta_t=0$ for all $t\in X\mysetminus\{\infty\}$. Then one checks that for the specializations $t:=B_N$, we have a lower bound $N\leq\#(\Orb_{\phi_t}(\beta_t)\cap\mathbb{Z})$ for all $N$. Hence, there is no uniform bound as in Theorem \ref{thm:families}.   
\end{remark}  
Although it is nice to have an upper bound, one expects that most orbits contain no integers, provided that $X$ has sufficiently many points. We prove this in the case of constant families over a curve; see Theorem \ref{thm:2}. To do this, we need a different sort of bound on the number of integral points in orbits than that given in \cite[Corollary 17]{Silv-Hsia}.   
\begin{lemma}{\label{lemma:2}} There exists an $N(\varphi,S)>0$ such that $\varphi^n(a)\in\mathcal{O}_{K,S}$ implies $n\leq N(\varphi,S)$ for all $\varphi$-wandering points $a\in \mathbb{P}^1(K)$. 
\end{lemma}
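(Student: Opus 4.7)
The plan is to apply a Siegel-type finiteness theorem to a well-chosen preimage of $\varphi^n(a)$ and then convert the resulting height bound into a bound on $n$ via the canonical height. The starting observation is that the hypothesis $\varphi^2\notin\widebar{K}[x]$ forces $\infty$ to lie outside the exceptional set $E(\varphi)$: if $\infty$ were exceptional, its orbit would either be a totally ramified fixed point or a totally ramified $2$-cycle, and in either case $\varphi^2$ would be a polynomial. In particular $\bigcup_{k\geq 1}\varphi^{-k}(\infty)$ is infinite, and a short Riemann--Hurwitz argument (if every $\varphi^{-k}(\infty)$ had size at most two, then infinitely many distinct points would be totally ramified for $\varphi$) yields an integer $k_0=k_0(\varphi)$ with $|\varphi^{-k_0}(\infty)|\geq 3$.

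Let $\Gamma=\varphi^{-k_0}(\infty)$, pass to a finite extension $K'/K$ over which every point of $\Gamma$ is defined, and let $S'$ denote the places of $K'$ above $S$. Suppose $a\in\mathbb{P}^1(K)$ is wandering, $\varphi^n(a)\in\mathcal{O}_{K,S}$, and $n>k_0$; set $Q:=\varphi^{n-k_0}(a)\in\mathbb{P}^1(K)$, so that $\varphi^{k_0}(Q)=\varphi^n(a)$ is in particular $S'$-integral in $K'$. Since the polar divisor of $\varphi^{k_0}$ is supported on $\Gamma$, the inequality $|\varphi^{k_0}(Q)|_v\leq 1$ at every $v\notin S'$ forces a $v$-adic separation $d_v(Q,\gamma)\geq c_v>0$ for every $\gamma\in\Gamma$, with $c_v=1$ outside a finite set of $v$; equivalently, $Q$ is an $S'$-integral point on $\mathbb{P}^1_{K'}\setminus\Gamma$. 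Since $|\Gamma|\geq 3$, Siegel's integral-point theorem for $\mathbb{P}^1$ minus three points (equivalently, the $S'$-unit equation after a Mobius change of coordinates sending three distinguished points of $\Gamma$ to $\{0,1,\infty\}$) then yields a height bound $h(Q)\leq C_1=C_1(\varphi,k_0,S)$.

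Finally, the functional equation $\hat{h}_\varphi(Q)=d^{n-k_0}\hat{h}_\varphi(a)$ together with $|\hat{h}_\varphi-h|=O_\varphi(1)$ gives
\[
\hat{h}_\varphi(a)\;\leq\;\frac{C_1+O_\varphi(1)}{d^{n-k_0}}.
\]
To upgrade this to a bound on $n$ it suffices to know $\epsilon_0:=\inf\{\hat{h}_\varphi(a):a\in\mathbb{P}^1(K)\ \text{wandering}\}>0$, which follows from Northcott: for $h(a)>B$ with $B$ large enough one has $\hat{h}_\varphi(a)\geq h(a)-O_\varphi(1)\geq 1$, and the remaining set $\{a\in\mathbb{P}^1(K):h(a)\leq B\}$ is finite, so $\hat{h}_\varphi$ attains only finitely many positive values on its wandering points. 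Combining the two inequalities yields $n\leq k_0+\log_d\!\big((C_1+O_\varphi(1))/\epsilon_0\big)=:N(\varphi,S)$.

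I expect the main obstacle to be the second paragraph: cleanly translating the $S$-integrality of $\varphi^n(a)$ at $\infty$ into the simultaneous $v$-adic separation of $Q$ from \emph{every} point of $\Gamma$ at every $v\notin S'$, with constants controlled uniformly in $Q$. Once that translation is carried out, the appeals to Siegel's theorem and Northcott are essentially formal.
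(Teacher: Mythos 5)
Your argument is correct and follows the same route as the paper: establish that some iterate $\varphi^{-k}(\infty)$ has at least three points, deduce via a Siegel-type theorem that the set of $S$-integral preimages under $\varphi^k$ has bounded height, pull back through the canonical height functional equation, and close with a Northcott argument showing $\hat{h}_\varphi$ has a positive lower bound on wandering $K$-points. The only cosmetic differences are that the paper fixes $k=4$ (citing \cite[Prop.~3.44]{Silv-Dyn}) rather than a generic $k_0$, and cites \cite[Thm.~3.36]{Silv-Dyn} as a black box for the finiteness of $T_4(\varphi,S)$ rather than re-deriving the $S'$-unit equation argument that you sketch in your second paragraph.
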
 
\begin{proof}[(Proof of Lemma \ref{lemma:2})] Suppose that $\varphi^n(a)\in \mathcal{O}_{K,S}$ and that $n\geq4$. Since, $\varphi^2(z)\notin \widebar{K}[z]$, it follows from the Riemann-Hurwitz formula that $\#\varphi^{-4}(\infty)\geq 3$; see \cite[Proposition 3.44]{Silv-Dyn}. In particular, the set of $S$-integral preimages  
\begin{equation} T_4(\varphi,S):=\{b\in \mathbb{P}^1(K)\;|\; \varphi^4(b)\in\mathcal{O}_{K,S}\} 
\end{equation}  
is finite; see \cite[Theorem 3.36]{Silv-Dyn}. Note that $\varphi^{4}(\varphi^{n-4}(a))=\varphi^n(a)\in\mathcal{O}_{K,S}$ and $\varphi^{n-4}(a)\in T_4(\varphi,S)$. Hence, $h(\varphi^{n-4}(a))$ is bounded independently of both $a$ and $n$. So together with part (b) and (c) of \cite[Proposition 6]{Silv-Hsia}, we see that $\deg(\varphi)^{n-4}\cdot \hat{h}_\varphi(a)=\hat{h}_\varphi(\phi^{n-4}(a))$ is bounded. Moreover, 
\begin{equation}{\label{hatmin}} \hat{h}^{\min}_{\varphi,K}:=\inf\{\hat{h}_\varphi(c)\;|\: c\in\mathbb{P}^1(K)\; \text{wandering for}\; \varphi\}
\end{equation} is strictly positive. To see this, choose an arbitrary wandering point $c_0\in K$ for $\varphi$ (possible, for instance, by Northcott's Theorem \cite[Theorem. 3.12]{Silv-Dyn}), and note that 
\[\hat{h}^{\min}_{\varphi,K}=\inf\{\hat{h}_\varphi(c)\;|\: c\in\mathbb{P}^1(K)\;\text{and}\; 0<\hat{h}_\varphi(c)<\hat{h}_\varphi(c_0)\}.\] However, this latter set is finite and consists of strictly positive numbers; hence $\hat{h}^{\min}_{\varphi,K}> 0$. Putting this together with the fact that $\deg(\varphi)^{n-4}\cdot \hat{h}^{\min}_{\varphi,K}\leq\deg(\varphi)^{n-4}\cdot \hat{h}_\varphi(a)$ is bounded by the height of points in $T_4(\varphi,S)$, we see that $n$ is bounded independently of $a$ as desired.           
\end{proof}

\begin{proof}[(Proof of Theorem \ref{thm:2})]Since $\PrePer(\varphi,K)$ is finite \cite[Theorem. 3.12]{Silv-Dyn} and $\beta:X\rightarrow\mathbb{P}^1$ is non-constant, it follows that 
\begin{equation}{\label{PrePer}} X_{\varphi,\beta}^{\PrePer}(K):=\{P\in X(K)\;|\;\beta_P\in\PrePer(\varphi,K)\}
\end{equation} 
is finite. In particular, for both statements of Theorem \ref{thm:2}, it suffices to assume that $P\in X(K)$ is such that $\beta_P$ is a wandering point of $\varphi$. In light of Lemma \ref{lemma:2}, we define the set  
\begin{equation} T_n(\varphi,\beta,S):=\{P\in X(K)\;|\; \varphi^n(\beta_P)\in\mathcal{O}_{K,S},\;\hat{h}_{\varphi}(\beta_P)>0\}.
\end{equation}    
Suppose that $X$ has genus $g\geq1$. In this case, it follows from a theorem of Siegel that $T_n(\varphi,\beta,S)$ is finite for all $n\geq0$; see, for instance, \cite[Corollary IX 4.3.1]{Silv-Arith}. Moreover, Lemma \ref{lemma:2} implies that $T_n(\varphi,\beta,S)=\varnothing$ for all $n>N(\varphi,S)$. Hence, \[\{P\in X(K)\;|\; \big(\Orb_\varphi(\beta_P)\cap\mathcal{O}_{K,S}\big)\neq\varnothing\}\]     
is finite in the positive genus case as claimed. 

On the other hand, when $X$ is a rational curve ($g=0$), we may assume that $X=\mathbb{P}^1$. In this case, $T_n(\varphi,\beta,S)$ can be infinite; see Remark \ref{rmk1} below. However, we will show that $T_n(\varphi,\beta,S)$ is sparse in $\mathbb{P}^1(K)$. With this in mind, for any subset $T\subseteq \mathbb{P}^1(K)$, define the \emph{upper density} of $T$ to be the quantity 
\begin{equation}{\label{density}} \widebar{\delta}_{K}(T):=\limsup_{B\rightarrow\infty}\frac{\sum_{\{P\in T\;|\; H_{\mathbb{P}^1}(P)\leq B\}}1}{\sum_{\{P\in \mathbb{P}^1(K)\;|\; H_{\mathbb{P}^1}(P)\leq B\}}1}. 
\end{equation}
Note that since $X_{\varphi,\beta}^{\PrePer}(K)$ on (\ref{PrePer}) is finite, Lemma \ref{lemma:2} implies that
\begin{equation}{\label{estimate}} \widebar{\Avg}(\varphi,\beta,S)\leq N(\varphi,S)\,\sum_{n=0}^{N(\varphi,S)}\widebar{\delta}_{K}\big(T_n(\varphi,\beta,S)\big).
\end{equation}  
In particular, it suffices to prove that $\widebar{\delta}_{K}(T_n(\varphi,\beta,S))=0$ for all $1\leq n\leq N(\varphi,S)$. Letting $f=\varphi^n\circ\beta$, this follows from the following lemma. 
\end{proof} 
\begin{lemma}{\label{lemma:density}} Let $f(z)\in K(z)$ be a non-constant rational function and let 
\[ T(f,S):=\{P\in\mathbb{P}^1(K)\;|\; f(P)\in\mathcal{O}_{K,S}\}\]
be the set of points in $\mathbb{P}^1(K)$ with $S$-integral images under $f$. Then 
\begin{equation*}
\widebar{\delta}_{K}(T(f,S)):=\limsup_{B\rightarrow\infty}\frac{\sum_{\{P\in T(f,S)\;|\; H_{\mathbb{P}^1}(P)\leq B\}}1}{\sum_{\{P\in \mathbb{P}^1(K)\;|\; H_{\mathbb{P}^1}(P)\leq B\}}1}=0.  
\end{equation*} 
That is, $T(f,S)$ has upper density zero in $\mathbb{P}^1(K)$. 
\end{lemma} 
\begin{proof}[Proof of Lemma \ref{lemma:density}] 
Note that $T(f,S)\leq T(f,S')$ whenever $S\subseteq S'$. Therefore, we may enlarge $S$ and assume that $f$ has good reduction outside of $S$. To count elements of $T(f,S,B)$, we sieve out points of $\mathbb{P}^1(K)$ given by local congruence conditions. To do this, let $\mathcal{P}$ be the set of primes $\mathfrak{p}\subseteq\mathcal{O}_{K}$, disjoint from $S$, such that $f$ has a pole $a_{\mathfrak{p}}\in\mathbb{P}^1(\mathbb{F}_\mathfrak{p})$; here, $\mathbb{F}_\mathfrak{p}$ is the residue field at $\mathfrak{p}$. In particular, it follows from the Chebotarev density theorem that $\mathcal{P}$ has positive Dirichlet density. Let $\pi_{\mathfrak{p}}:\mathbb{P}^1(K_\mathfrak{p})\rightarrow \mathbb{P}^1(\mathbb{F}_\mathfrak{p})$ be the reduction map, and consider the set
\[\mathcal{I}_\mathfrak{p}:=\{P\in\mathbb{P}^1(K):\,\pi_\mathfrak{p}(P)=a_\mathfrak{p}\}.\]
It follows from the proof of Schanuel's Theorem \cite[Corollary 1 p.447]{Schanuel} that $\mathcal{I}_\mathfrak{p}$ has density $(N(\mathfrak{p})+1)^{-1}$ in $\mathbb{P}^1(K)$. In other words, the residue classes modulo $\mathfrak{p}$ equidistribute with respect to the Weil height.  To see this, simply replace the lattice $\Lambda\subseteq\mathbb{R}^k$ with a translate $\alpha +\Lambda$ for $\alpha\in\mathbb{R}^k$ in \cite[Theorem 2]{Schanuel}, and deduce that $\overline{\delta}_K$ in (\ref{density}) is translation invariant. In particular, by choosing lifts (in $K$) for each residue class in $\mathbb{P}^1(\mathbb{F}_\mathfrak{p})$ and translating: for instance applying a map of the form $P\rightarrow P+(b-a)$, we see that 
\[\overline{\delta}_K\Big(\{P\in\mathbb{P}^1(K)\,:\, \pi_\frak{p}(P)=\bar{a}\}\Big)=\overline{\delta}_K\Big(\{P\in\mathbb{P}^1(K)\,:\, \pi_\frak{p}(P)=\bar{b}\}\Big)\]  
for all $\bar{a},\bar{b}\in\mathbb{P}^1(\mathbb{F}_\frak{p})$. In particular, since distinct residue classes are disjoint and their union covers $\mathbb{P}^1(K)$, it follows that $\overline{\delta}_K(\mathcal{I}_\mathfrak{p})=(1+N(\mathfrak{p}))^{-1}$ as claimed. Moreover, translation invariance and the Chinese Remainder Theorem imply that      
\[\overline{\delta}_K\bigg(\bigcap_{N(\mathfrak{p})\leq x}\mathcal{I}_\mathfrak{p}\bigg)=\prod_{N(\mathfrak{p})\leq x}\frac{1}{1+N(\mathfrak{p})}\]
for all $x\in\mathbb{R}_{\geq0}$. On the other hand, $T(f,S)$ is disjoint from $\mathcal{I}_\mathfrak{p}$ for all $\mathfrak{p}\in\mathcal{P}$: if $P\in \mathcal{I}_\mathfrak{p}$ then $\pi_\frak{p}(f(P))=f(a_\frak{p})=\infty_\mathfrak{p}$, since $f$ has good reduction at $\mathfrak{p}$ and $\mathfrak{p}$ is not in $S$. In particular, the inclusion-exclusion principle implies that    
\begin{equation}
\begin{split}
\overline{\delta}_K(T(f,S))\leq\prod_{N(\mathfrak{p})\leq x}\bigg(1-\frac{1}{1+N(\mathfrak{p})}\bigg). 
\end{split} 
\end{equation} 
However, we can let $x$ grow.  Since $\mathcal{P}$ had positive density in the primes, the product above converges to $0$ as $x \to \infty$: recall that an infinite product $\prod (1 - a_i)$, with $0 \leq a_i < 1$, converges to $0$ if and only if $\sum a_i$ diverges. In our case, $a_i$ is on the order of $N(\mathfrak{p}_i)^{-1}$, and since the sum of reciprocals of a set  of primes of positive density diverges, the infinite product converges to $0$. 
\end{proof} 
\begin{remark}{\label{rmk1}} When $X=\mathbb{P}^1$, it is possible that $T_n(\varphi,\beta,S)$ is infinite, even if one assumes that $\varphi^2(z)\notin K[z]$. For example, let $F(z)\in\mathbb{Z}[z]$ be any polynomial of degree $2d$, let $D>1$ be any square-free integer, and let   
\begin{equation} 
\varphi(z)=\frac{F(z)}{(z^2-D)^d}.
\end{equation} 
 If $(u,v)\in\mathbb{Z}^2$ is a solution to the Pell equation $u^2-Dv^2=1$, then $\varphi(u/v)=v^{2d}\cdot F(u/v)\in\mathbb{Z}$. Setting $\beta(t)=t$, we see that $T_n(\varphi,\beta,S)$ is infinite. However, the set of coprime pairs $(u,v)$ satisfying the Pell equation is sparse in $\mathbb{P}^1(\mathbb{Q})$.          
\end{remark} 
We would like to extend Theorem \ref{thm:2} to non-constant families of rational maps. To do this, note that if $(X,\phi,\beta)$ is a family as in Theorem \ref{thm:families}, then the average number of $S$-integral points in $\Orb_{\phi_P}(\beta_P)$ is bounded. In particular, such families are a good place to test generalizations of Theorem \ref{thm:2}. In order to distill the additional properties needed, we study the following family: 

\begin{proposition} 
\label{prop:example} Let $\phi:\mathbb{P}^1\rightarrow\Rat_3$\, be the family of rational functions given by 
\begin{equation}{\label{family}} \phi_t(z):=\frac{z-t}{z^3+1}, \;\;\;\;\text{for all}\;\; t\in\mathbb{P}^1(\mathbb{Q})\mysetminus\{-1,\infty\}.  
\end{equation}
If $\beta\in\mathbb{Q}(t)$ and $\deg(\beta)\geq3$, then $\widebar{\Avg}(\phi,\beta,\mathbb{Z})=0$.  
\end{proposition} 
\begin{proof} 
We must first show that the second iterate of $\phi_t$ is not a polynomial for all $t\neq-1$. To do this, we compute that  
\begin{equation} \phi_t^2(z):=\frac{f_t(z)}{g_t(z)}=\frac{-tz^9 + z^7 - 4tz^6 + 2z^4 - 5tz^3 + z - 2t}{z^9 + 3z^6 + 4z^3 -
    3tz^2 + 3t^2z - t^3 + 1}\vspace{.05cm} 
\end{equation}
and calculate the resultant $\Res_x(f_t,g_t)=(t+1)^{12}\cdot(t^2-t+1)^{12}$. One checks that the only rational root of the resultant is $t=-1$. In particular, it follows that if $t\neq -1$, then $\phi_t^2$ is not a polynomial. From here, we show the existence of a largest iterate that can produce an integer point. To do this, we follow the outline of the proof of Siegel's theorem on integral points in \cite[Thoerem 3.36]{Silv-Dyn}. Write $t=a/b$ for some coprime $a,b\in\mathbb{Z}$ and suppose that $\phi_t^n(\beta_t)=\phi_t(\phi_t^{n-1}(\beta_t))\in\mathbb{Z}$. If we write 
\[\phi_t([x,y])=\big[bxy^2-ay^3,b(x^3+y^3)\big]\] in terms of coordinates on $\mathbb{P}^1$, then the proof of \cite[Thoerem 3.36]{Silv-Dyn} implies that 
\[\bigg\{\frac{x}{y}\in\mathbb{Q}\;\bigg|\;\phi_t\Big(\frac{x}{y}\Big)\in\mathbb{Z}\bigg\}\subseteq\bigcup_{r|(a^3+b^3)} \big\{x,y\in\mathbb{Z}\;\big|\; x^3+y^3=r\big\}. \]
Note that $|r|\leq2H(t)^3$, and combined with Lemma \ref{lemma:eg:htbd} below, we see that 
\begin{equation}{\label{c}}
H(x/y)\leq 2\sqrt{2}\cdot H(t)^{\frac{3}{2}},\,\;\;\text{whenever}\;\, \phi_t\bigg(\frac{x}{y}\bigg)\in\mathbb{Z}.
\end{equation}
In particular, we obtain the upper bound $h(\phi_t^{n-1}(\beta_t))\leq3/2\cdot h(t)+\log(2\sqrt{2})$. Moreover, parts (b) and (c) of \cite[Proposition 6]{Silv-Hsia} imply that there is a constant $c_3$ such that  \vspace{.05cm}
\begin{equation}{\label{d}}
\begin{split}
3^{n-1}\cdot\hat{h}_{\phi_t}(\beta_t)=\hat{h}_{\phi_t}(\phi_t^{n-1}(\beta_t))&\leq h(\phi_t^{n-1}(\beta_t))+5/2\cdot h(\phi_t)+c_{3}/2\\[5pt]
&\leq3/2\cdot h(t)+\log(2\sqrt{2})+ 5/2\cdot h(\phi_t)+c_{3}/2. 
\end{split} \vspace{.05cm}
\end{equation} 
On the other hand, $h(\phi_t)=h([1,-t,1,1])=h(t)$, so that (\ref{d}) implies that 
\[3^{n-1}\cdot\hat{h}_{\phi_t}(\beta_t)\leq4\cdot h(t)+\log(2\sqrt{2})+c_{3}/2.\]
Finally, parts (a) and (b) of \cite[Proposition 6]{Silv-Hsia} give the lower bound
\begin{equation}{\label{e}} (\deg(\beta)-5/2)\cdot h(t)-B_1-c_{3}/2\leq\hat{h}_{\phi_t}(\beta_t).   
\end{equation}
However, $\deg(\beta)\geq3>5/2$, and we deduce that \vspace{.05cm}
\[3^{n-1}\leq\frac{4\cdot h(t)+\log(2\sqrt{2})+c_{3}/2}{(\deg(\beta)-5/2)\cdot h(t)-B_1-c_{3}/2}, \vspace{.05cm}\]
Hence, $\phi_t^n(\beta_t)\in\mathbb{Z}$ implies that $n$ is bounded independently of $t$ (for all but finitely many $t$). On the other hand, since $(\Orb_{\phi_t}(\beta_t)\cap\mathbb{Z})$ is finite for all $t\neq-1$ by Silverman's theorem, we can bound the number of integral points in orbits for these exceptional $t$ separately. In any case, we conclude that there is an integer $N(\phi,\beta)$, such that $\phi_t^n(\beta_t)\in\mathbb{Z}$ implies $n\leq N(\phi,\beta)$ for all $t\neq-1$ (compare to Lemma \ref{lemma:2} above).  

Finally, Lemma \ref{lemma:density} implies that $T(\phi^n\circ\beta,\mathbb{Z})$ has density zero in $\mathbb{P}^1(\mathbb{Q})$ for all $n\leq N(\phi,\beta)$. It follows that $\widebar{\Avg}(\phi,\beta,\mathbb{Z})=0$ as claimed.
\end{proof}           
\begin{lemma}{\label{lemma:eg:htbd}} Suppose that $x^3+y^3=B$, for some integers $x,y, B\in\mathbb{Z}$ with $B\neq0$. Then 
\begin{equation}{\label{eg:htbd}} \max\big\{|x|,|y|\big\}\leq 2 \sqrt{|B|}. 
\end{equation}   
\end{lemma}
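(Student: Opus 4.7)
The plan is to reduce to a single-variable estimate via the factorization $x^3 \pm z^3 = (x \pm z)(x^2 \mp xz + z^2)$, splitting into two cases according to whether $x$ and $y$ have the same sign or opposite signs. Throughout, assume without loss of generality that $M := \max\{|x|,|y|\} = |x|$, and note that $|B| \geq 1$ since $B$ is a nonzero integer.

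In the same-sign case (including when one coordinate is zero), I would observe that $|x^3 + y^3| = |x|^3 + |y|^3 \geq |x|^3$, so $|x| \leq |B|^{1/3}$. Since $|B| \geq 1$, this gives $|x| \leq |B|^{1/3} \leq \sqrt{|B|} \leq 2\sqrt{|B|}$, as required.

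In the opposite-sign case, the symmetry $(x,y) \mapsto (-x,-y)$ lets me assume $x > 0 > y$, and I write $z = -y > 0$. From $|x| \geq |y|$ I get $x \geq z$, and since $B \neq 0$ the equality $x = z$ is excluded, so $x - z \geq 1$. Now factor
\begin{equation*}
B = x^3 - z^3 = (x-z)(x^2 + xz + z^2).
\end{equation*}
Because $xz, z^2 \geq 0$, the second factor dominates $x^2$, so $|B| \geq (x-z) \cdot x^2 \geq x^2$, and therefore $|x| \leq \sqrt{|B|} \leq 2\sqrt{|B|}$.

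Combining both cases yields the bound $M \leq \sqrt{|B|}$, which is in fact strictly stronger than the stated inequality $M \leq 2\sqrt{|B|}$. There is no real obstacle here — the only point that might trip one up is remembering to rule out $x = z$ in the opposite-sign case using $B \neq 0$, which is essential because otherwise the factor $x - z$ gives no information. The slack factor of $2$ in the statement is presumably built in to absorb the constant $\sqrt{2}$ that appears when this lemma is applied in \eqref{c}.
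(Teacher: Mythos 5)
Your proof is correct, and in fact proves the strictly sharper bound $\max\{|x|,|y|\}\leq\sqrt{|B|}$. However, you take a genuinely different route from the paper. You split into cases by the relative signs of $x$ and $y$: in the same-sign case you use cubic dominance ($|B|=|x|^3+|y|^3\geq|x|^3$) together with $|B|\geq1$ to pass from $|B|^{1/3}$ to $|B|^{1/2}$, and in the opposite-sign case you use the factorization $x^3-z^3=(x-z)(x^2+xz+z^2)$ together with $x-z\geq1$ and $x^2+xz+z^2\geq x^2$. The paper instead avoids any case split by completing the square in the quadratic factor, writing $x^2-xy+y^2=\tfrac34(x-y)^2+\tfrac14(x+y)^2$, a sum of squares that is sign-agnostic; combined with $|x+y|\geq1$ this yields a bound on $\max\{|x-y|,|x+y|\}$, and then the elementary inequality $\max\{|x|,|y|\}\leq\max\{|x-y|,|x+y|\}$ finishes. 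The paper's computation is uniform and compact but produces the looser constant $2$; your argument costs a case analysis but is more elementary (no algebraic identity to spot) and tightens the constant to $1$. For the downstream application in $\eqref{c}$ either constant suffices, so the improvement is cosmetic, but it is a clean observation.
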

\begin{proof}[(Proof of Lemma \ref{lemma:eg:htbd})] We factor $x^3+y^3$ in $\mathbb{Q}[x,y]$, and write
\[ B=x^3+y^3=(x+y)\cdot(x^2-xy+y^2)=(x+y)\cdot\bigg(\frac{3}{4}(x-y)^2+\frac{1}{4}(x+y)^2\bigg).\] 
In particular, we see that 
\begin{equation}{\label{a}} \max\Big\{3/4\cdot (x-y)^2,\;1/4\cdot(x+y)^2\Big\}\leq|B|, 
\end{equation} 
since $|x+y|\geq1$ and both terms on the left side of (\ref{a}) are positive. On the other hand, it is straightforward to verify that \begin{equation}{\label{b}} \max\big\{|x|,|y|\big\}\leq\max\Big\{|x-y|,|x+y|\Big\},
\end{equation} 
and we deduce from (\ref{a}) and (\ref{b}) that $\max\{|x|,|y|\}\leq 2\sqrt{|B|}$ as claimed.      
\end{proof} 
\end{section}

\begin{section}{Height uniformity conjectures and averages in families}{\label{s3}}
The main technique we used to establish an average-zero statement for the one-dimensional families in Proposition \ref{prop:example} or the constant families in Theorem \ref{thm:2} was to prove the existence of a uniform largest iterate $N(\phi,\beta,S)$ that could produce an $S$-integral point; see Lemma \ref{lemma:2}. To find such an $N(\phi,\beta,S)$, we used Theorem \ref{CallSilverman} or \cite[Proposition 6]{Silv-Hsia} to estimate $\hat{h}_{\phi_P}(\beta_P)$, and then we bounded the height of points $Q\in\mathbb{P}^1(K)$ such that $\phi_P^4(Q)\in\mathcal{O}_{K,S}$; see Lemma \ref{lemma:eg:htbd}.

For curves $X$ and non-isotrivial families $(X,\phi,\beta)$, bounding $\hat{h}_{\phi_P}(\beta_P)$ is not a problem. On the other hand, the upper bound on the height of points $Q\in\mathbb{P}^1(K)$ such that $\phi_P^4(Q)\in\mathcal{O}_{K,S}$ follows from several height-uniformity conjectures in arithmetic geometry, including the following: 

\begin{conjecture}{\label{conjecture}} Let $\mathcal{C}\rightarrow \mathcal{B}$ be a family of curves equipped with a family of non-constant maps $\mathcal{F}\in K(\mathcal{C})$. Then there are constants $\kappa_1$ and $\kappa_2$ such that
\begin{equation}{\label{linear}} h_\mathcal{C}(Q)\leq\kappa_1\cdot h_{\mathcal{B}}(P)+\kappa_2 \;\;\;\;\;\, \text{for all} \,\;\;\;\big\{Q\in\mathcal{C}_P(K)\;\big\vert\;\, \mathcal{F}_P(Q)\in\mathcal{O}_{K,S}\big\},
\end{equation}
whenever $\mathcal{C}_P$ is a smooth curve of positive genus, or the map $\mathcal{F}_P:\mathcal{C}_P\rightarrow\mathbb{P}^1$ has at least three distinct poles. Here, $h_\mathcal{C}$ is an arbitrary height function and $h_\mathcal{B}$ is ample.     
\end{conjecture}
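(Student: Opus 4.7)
The plan is to treat Conjecture \ref{conjecture} as a uniform, family-wide version of Siegel's theorem for $S$-integral points, and to attack it via a Vojta-type arithmetic height inequality on the total space $\mathcal{C}$, in the spirit of the proof of Theorem \ref{thm:families}. After enlarging $S$, I would first pass to smooth projective models $\overline{\mathcal{C}} \to \overline{\mathcal{B}}$ with $\overline{f} : \overline{\mathcal{C}} \to \mathbb{P}^1$ extending $f$, and set $D := \overline{f}^{-1}(\infty)$. The hypothesis $f_P(Q)\in\mathcal{O}_{K,S}$ then becomes exactly the statement that $Q$ is an $(S,D)$-integral point on the fiber $\overline{\mathcal{C}}_P$. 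Because each smooth fiber has positive genus, the restriction of the relative canonical-plus-polar divisor $K_{\overline{\mathcal{C}}/\overline{\mathcal{B}}}+D$ to $\overline{\mathcal{C}}_P$ has degree $\geq 2g-1\geq 1$ and is therefore ample on the fiber; this is the geometric input that powers the argument.

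Next, I would invoke a Vojta-style inequality on the arithmetic variety $\overline{\mathcal{C}}$: for every $\epsilon>0$ and every $(S,D)$-integral $Q \in \overline{\mathcal{C}}(K)$ lying in a smooth fiber,
\[ h_{\overline{\mathcal{C}},\,K_{\overline{\mathcal{C}}/\overline{\mathcal{B}}}+D}(Q)\;\leq\; \epsilon\,h_{\overline{\mathcal{C}}}(Q)\;+\;\kappa\,h_{\overline{\mathcal{B}}}(\pi(Q))\;+\;O(1). \]
Combined with the fiberwise ampleness of $K_{\overline{\mathcal{C}}/\overline{\mathcal{B}}}+D$ and the relative height comparison of \cite[Theorem III.10.2]{Silv-Advanced} --- the same tool used in the proof of Theorem \ref{thm:families} --- this yields a reverse estimate $h_{\overline{\mathcal{C}}}(Q) \leq C \cdot h_{\overline{\mathcal{C}},\,K_{\overline{\mathcal{C}}/\overline{\mathcal{B}}}+D}(Q) + C \cdot h_{\overline{\mathcal{B}}}(\pi(Q)) + O(1)$ for some constant $C$ independent of the fiber, and substituting back and choosing $\epsilon < 1/C$ allows us to absorb the $\epsilon\,h_{\overline{\mathcal{C}}}(Q)$ term to the left, producing the desired bound $h_{\overline{\mathcal{C}}}(Q) \leq \kappa_1 h_{\overline{\mathcal{B}}}(\pi(Q)) + \kappa_2$.

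The central obstacle is that the Vojta-style inequality in the middle step is itself a deep open conjecture for arithmetic surfaces, essentially equivalent to (or a consequence of) Vojta's main conjecture. Unconditional progress therefore seems possible only in special situations: (i) isotrivial families, where the estimate reduces to Vojta's theorem for a single positive-genus curve; (ii) fibered surfaces admitting a dominant map to an abelian variety of appropriate dimension, where one can substitute a uniform form of Faltings' theorem on subvarieties of abelian varieties; or (iii) the global function-field setting, where analogues of Vojta's inequality are unconditional --- a feature consistent with, and in fact underlying, the function-field result alluded to in the introduction (Theorem \ref{finitefield}). Absent such structure, I would expect Conjecture \ref{conjecture} to remain out of reach by current methods.
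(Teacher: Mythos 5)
The statement you were asked about is Conjecture \ref{conjecture}, which the paper does not prove: it is stated as an open conjecture, used only as a hypothesis in Theorem \ref{conditionaltheorem} and Corollary \ref{Maxitcor}, and the remark following it records precisely the points you raise — that it is a consequence of Vojta's conjecture (citing Ih and Stoll for the justification) and that the analogous linear height bounds are known unconditionally over function fields. Your sketch is consistent with the paper's own discussion, and you correctly locate both the Vojta-type inequality on the total space that would yield the bound and the reason the statement remains out of reach over number fields, so there is no gap to flag relative to the paper, which offers no proof either.
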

\begin{remark} On each of the relevant fibers, Siegel \cite{Siegel} proved that $\{Q\in\mathcal{C}_P(K)\;\vert\;\, \mathcal{F}_P(Q)\in\mathcal{O}_{K,S}\big\}$ is a finite set. Hence, Conjecture \ref{conjecture} roughly states that the heights of these points are controlled by the height of the fiber.    
\end{remark} 
\begin{remark} For families of elliptic curves, versions of Conjecture \ref{conjecture} were made by Hall and Lang \cite[IV.7]{Silv-Arith}. Moreover, Conjecture \ref{conjecture} is a consequence of the Vojta conjecture \cite[\S3.4.3]{Vojta} when the fibers $\mathcal{C}_P$ have positive genus; for justification, see \cite[Theorem 1.0.1]{Ih2} for the case of elliptic curves and \cite{Heightuniformity} or \cite[Conjecture 4]{Stoll} for the case of higher genus. Over function fields, bounds such as those on (\ref{linear}) have appeared in \cite{Kim,Schmidt}.       
\end{remark} 
Assuming Conjecture \ref{conjecture}, we prove an average-zero statement for all one-parameter families, analogous to Theorem \ref{thm:2} and Proposition \ref{prop:example} above. 
\begin{proof}[(Proof of Theorem \ref{thm:conditional})] Write $\phi^2(z)=f(z)/g(z)$ for some coprime polynomials $f,g\in K(X)[z]$. Note that $\phi^2(z)\not\in K(X)[z]$ implies that $\deg(g)\geq1$ and that $\Res(f,g)\in K(X)$ is non-zero. In particular,
\begin{equation}\label{open}  
\big\{P\in X\,:\, \deg(f_P)=\deg(f),\,\deg(g_P)=\deg(g),\, \Res(f,g)_P\neq0\big\}\subseteq I_{X,\phi}
\end{equation} 
are open subsets of $X$; here we use that taking resultants commutes with specialization, i.e. $\Res(f_P,g_P)=\Res(f,g)_P$, whenever $P\in X$ satisfies $\deg(f_P)=\deg(f)$ and $\deg(g_P)=\deg(g)$; see \cite[IV.\S8]{LangAlgebra}. 

Now, suppose that $P\in I_{X,\phi}$. Then \cite[Proposition 3.44]{Silv-Dyn} implies that $\phi_P^4\in K(z)$ has at least $3$ distinct poles. Therefore, Conjecture \ref{conjecture} applied to the (trivial) fibered surface $\mathbb{P}^1\times X \rightarrow X$ and the map $\mathcal{F}(Q,P)=\phi_p^4(Q)$, implies that there are constants $\kappa_1$ and $\kappa_2$ such that for all $Q\in\mathbb{P}^1(K)$:  
\[\mathcal{F}_P(Q)=\phi_P^4(Q)\in\mathcal{O}_{K,S}\;\;\,\text{implies}\;\;\, h(Q)\leq \kappa_1\cdot h_X(P)+\kappa_2.\]
Here $h$ is the height function on $\mathbb{P}^1\times X$ given by $h(Q,P):=h_\mathbb{P}^1(Q)$. In particular, if $P\in I_{X,\phi}(K)$ and $\phi_P^n(\beta_P)\in\mathcal{O}_{K,S}$ for some $n\geq4$, then 
\[\mathcal{F}_P(\phi_P^{n-4}(\beta_P))=\phi_P^4(\phi_P^{n-4}(\beta_P))=\phi_P^{n}(\beta_P)\in\mathcal{O}_{K,S}.\]
Hence, $h(\phi_P^{n-4}(\beta_P))\leq \kappa_1 h_X(P)+\kappa_2$. On the other hand, there are constants $\kappa_3$ and $\kappa_4$ such that: 
\[ \big|\hat{h}_{\phi_P}(x)-h(x)\big|\leq \kappa_3\cdot h_X(P)+\kappa_4, \;\;\text{for all }\,x\in \mathbb{P}^1(\overline{K});\] 
this fact follows from \cite[Theorem 3.1]{Call-Silverman} or \cite[Proposiion 6(b)]{Silv-Hsia} followed by height bounds (\ref{htbd1}), (\ref{htbd2}) and (\ref{htbd3}) above. Therefore, 
\begin{equation}{\label{conhtbd1}}
\begin{split} 
d^{n-4}\cdot \hat{h}_{\phi_P}(\beta_P)&=\hat{h}_{\phi_P}(\phi_P^{n-4}(\beta_P))\\[3pt]
&\leq h(\phi_P^{n-4}(\beta_P))+\kappa_3\cdot h_X(P)+\kappa_4\\[3pt]
&\leq (\kappa_1+\kappa_3)\,h_X(P)+(\kappa_2+\kappa_4).  
\end{split} 
\end{equation}  
Now, as in the proof of Theorem \ref{thm:families}, we use the estimate for $\hat{h}_{\phi_P}(\beta_P)$ due to Call and Silverman. Specifically, (\ref{conhtbd1}) implies that 
\[d^{n-4}\cdot \frac{\hat{h}_{\phi_P}(\beta_P)}{h_X(P)}\leq \frac{(\kappa_1+\kappa_3)\,h_X(P)+(\kappa_2+\kappa_4)}{h_X(P)}.\]  
Cleary, if $I_{X,\phi}(K)$ is finite, then there is nothing to prove. Therefore, we may assume that $I_{X,\phi}(K)$ is infinite. Hence, Theorem \ref{CallSilverman} implies that  
\begin{equation}{\label{conhtbd2}}
\begin{split}
d^{n-4}\,\hat{h}_\phi(\beta)&=\lim_{\substack{h_X(P)\rightarrow\infty\\ P\in I_{X,\phi}(K)}}d^{n-4}\; \frac{\hat{h}_{\phi_P}(\beta_P)}{h_X(P)}\\[4pt] 
&\leq\lim_{\substack{h_X(P)\rightarrow\infty\\ P\in I_{X,\phi}(K)}} \frac{(\kappa_1+\kappa_3)\, h_X(P)+(\kappa_2+\kappa_4)}{h_X(P)}=\kappa_1+\kappa_3. 
\end{split}
\end{equation}  
Consequently, for all points $P\in I_{X,\phi}(K)$ with $h_X(P)$ bigger than some fixed $\delta>0$, 
\[\phi_P^n(\beta_P)\in\mathcal{O}_{K,S}\;\;\,\text{implies}\;\;\, n\leq\max\bigg\{4,\log_d\bigg(\frac{\kappa_1+\kappa_3}{\hat{h}_\phi(\beta)}\bigg)+4\bigg\},\]
On the other hand, since $h_X$ is ample, the set of points $E_\delta:=\{P\in I_{\phi,X}(K)\,:\, h_X(P)\leq \delta\}$ is finite. Therefore, we can bound the $n$ such that $\phi_P^n(\beta_P)\in\mathcal{O}_{K,S}$ for $P\in E_\delta$ separately. In particular, we have shown that 
\begin{equation}{\label{conhtbd2}}
N(\phi,\beta,S):=\sup\big\{n\,:\, \phi_P^n(\beta_P)\in\mathcal{O}_{K,S}\;\; \text{for some}\; P\in I_{X,\phi}(K),\; \hat{h}_{\phi_P}(\beta_P)>0\big\} \vspace{.05cm} 
\end{equation} 
is bounded as claimed. The averages claim now follows from the argument given in the proof of Theorem \ref{thm:2} verbatim. Namely, if $X$ has positive genus, then the set of points in $I_{X,\phi}(K)$ such that $\phi_P^n(\beta_P)\in\mathcal{O}_{K,S}$ for some $n\leq N(\phi,\beta,S)$ is a union of finite sets, hence finite. In particular, the numerator defining $\widebar{\Avg}(\phi,b,S)$ is zero for all points of sufficiently large height. 

On the other hand, suppose that $X$ has genus $0$. Then $X\cong\mathbb{P}^1$, since $X(K)$ is non-empty. Moreover, $\mathbb{P}^1(K)\,\mysetminus\, I_{\mathbb{P}^1,\phi}(K)$ is finite by (\ref{open}). Therefore, the asymptotic count for points of bounded height in $I_{X,\phi}(K)$ is the same as that for $\mathbb{P}^1(K)$. Hence, Lemma \ref{lemma:density} and the existence of $N(\phi,\beta,S)$ implies that $\widebar{\Avg}(\phi,b,S)=0$ as in the proof of Theorem \ref{thm:2}.                    
\end{proof}
To extend Theorem \ref{thm:conditional} to non-constant families of rational maps $(X,\phi,\beta)$ parametrized by varieties of arbitrary dimension, we translate our strategy for curves to more general language. In particular, our goal (loosely speaking) is to show that the set  
\begin{equation}{\label{strategy}}
\big\{P\in I_{X,\phi}(K)\;:\; \big(\Orb_{\phi_P}(\beta_P)\cap\mathcal{O}_{K,S}\big)\neq\varnothing\big\}
\end{equation} 
is thin in $X$; see \cite[\S3.1]{Serre} for the definition of thin. From here, when $\dim(X)\geq2$, if $X$ has sufficiently many rational points and the thin subset (possibly a subvariety) containing (\ref{strategy}) is small enough, then one expects that $\widebar{\Avg}(\phi,b,S)=0$. 

On the other hand, to do this, one likely needs to control $\hat{h}_{\phi_P}(\beta_P)$ for most points of $X$. Unfortunately, this task is difficult in general. For instance, the Zariski closure of the set of points $P\in X(\widebar{K})$ such that $\hat{h}_{\phi_P}(\beta_P)=0$ has positive dimension: it contains the codimension-1 subvariety of points satisfying $\phi_P(\beta_P)=\beta_P$. 

However, a nice lower bound on canonical heights in terms of the height of the corresponding map it is predicted by \cite[Conjecture 4.98]{Silv-Dyn}, a dynamical analog of Lang's canonical height conjecture for elliptic curves \cite[page 92]{Lang-Ellip}, and it is possible that one can use this conjecture and ideas in \cite[\S3]{Call-Silverman} to attack (\ref{strategy}). Likewise when $\dim(X)\geq2$, one might try to use the Vojta conjecture, in place of Conjecture \ref{conjecture}, to prove the existence of a ``largest iterate" $N(\phi,\beta,S)$ as in Theorem \ref{thm:conditional}. 

As motivation for the study of integral points in orbits in families $(X,\beta,S)$ when $\dim(X)\geq2$, we conclude with the following example.   
\begin{proposition} 
\label{prop:exam}
Let $\phi:\mathbb{A}^1\times\mathbb{A}^1\times\mathbb{A}^1\rightarrow\Rat_3$ be the family of rational maps: 
\begin{equation}{\label{3dimeg}} \phi_{(r,s,t)}(z):=\frac{(r\cdot s)\cdot z^3+s\cdot z+t}{z^2+1}\;\;\;\;\;\text{for}\,\;r,s,t\in\mathbb{Z}. \vspace{.05cm}
\end{equation} 
If $\beta:\mathbb{A}^1\times\mathbb{A}^1\times\mathbb{A}^1\rightarrow \mathbb{A}^1$ is the map $\beta_{(r,s,t)}=r^{n_1}\cdot s^{n_2}\cdot t^{n_3}$\,\, with \,\,$\min\{n_1,n_2,n_3\}\geq6$, then \vspace{.05cm}
\begin{equation}{\label{averagedef}} \widebar{\Avg}_\mathbb{Z}(\phi,\beta,\mathbb{Z}):=\limsup_{B\rightarrow \infty}\frac{\sum_{|r|,|s|,|t|\leq B}\#\Big(\mathcal{O}_{\phi_{(r,s,t)}}\big(\beta_{(r,s,t)}\big)\cap\mathbb{Z}\Big)}{(2B+1)^3}\,<\, \infty 
\end{equation} 
\end{proposition}
A priori, it is not clear that $\Orb_{\phi_{(r,s,t)}}\big(\beta_{(r,s,t)}\big)$ contains only finitely many integers for $r,s,t\in\mathbb{Z}$; for instance, $\phi_{(1,s,0)}(z)=sz$ is a polynomial, and \cite[Theorem 3.43]{Silv-Dyn} does not apply. However, the basepoint $\beta_{(1,s,0)}=0$ is fixed in this case, and so finiteness is not a problem. Before we begin the proof of Proposition \ref{prop:exam}, we prove the stronger statement: that $\Orb_{\phi_{(r,s,t)}}\big(\beta_{(r,s,t)}\big)$ contains only finitely many integers for all rational values of $r,s,t\in\mathbb{Q}$.  
\begin{lemma}{\label{3dim}} The orbit $\Orb_{\phi_{(r,s,t)}}\big(\beta_{(r,s,t)}\big)$ contains only finitely many integers for all $r,s,t\in\mathbb{Q}$.  
\end{lemma}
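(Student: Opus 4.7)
The plan is to reduce to Silverman's theorem on finiteness of $S$-integral points in orbits \cite{Silv-Int} on an open subset of the parameter space, and to handle the finitely many degenerate strata by direct inspection. I begin by determining when $\phi_{(r,s,t)}$ genuinely specializes to an element of $\mathrm{Rat}_3$: the numerator $rsx^3+sx+t$ has degree $3$ exactly when $rs\neq 0$, and its value $s(1-r)i+t$ at $x=i$ is nonzero except when $(r,t)=(1,0)$. Thus on the open locus $U:=\{rs\neq 0\}\cap\{(r,t)\neq(1,0)\}$ the map has honest degree $3$ with simple poles at $\pm i$ and a further pole at $\infty$ (the numerator dominating the denominator). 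Since $\phi(\pm i)=\phi(\infty)=\infty$, the finite points $\pm i$ are also poles of $\phi^2$, so $\phi^2\notin\widebar{\mathbb{Q}}[x]$; Silverman's theorem then delivers finitely many integers in the orbit for every $(r,s,t)\in U$.

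It remains to handle the complement of $U$ in $\mathbb{Q}^3$, which I would break into four disjoint strata. \textbf{(a)} If $s=t=0$, then $\phi\equiv 0$ is the constant map and $\beta_{(r,0,0)}=0$ (using $n_2,n_3\geq 1$), so the orbit is simply $\{0\}$. \textbf{(b)} If $s=0$ and $t\neq 0$, then $\phi=t/(x^2+1)$ has degree $2$; solving $\phi(x)=\pm i$ yields $x^2=-1\mp it$, producing four finite poles of $\phi^2$, so Silverman applies. \textbf{(c)} If $r=0$ and $s\neq 0$, then $\phi=(sx+t)/(x^2+1)$ has degree $2$; the equation $\phi(x)=\pm i$ is a quadratic with leading coefficient $\mp i$, hence with two finite roots, so again $\phi^2\notin\widebar{\mathbb{Q}}[x]$ and Silverman applies. \textbf{(d)} If $r=1$, $t=0$, $s\neq 0$, the numerator factors as $sx(x^2+1)$ and collapses $\phi$ to $\phi(x)=sx$; but $\beta_{(1,s,0)}=0$ since $n_3\geq 1$, and the orbit is $\{0\}$. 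Together with $U$, these four strata exhaust all of $\mathbb{Q}^3$.

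The main conceptual point---and the only place where the exponent condition $n_i\geq 6$ (in fact only $n_i\geq 1$) is used---is that $\beta=r^{n_1}s^{n_2}t^{n_3}$ automatically vanishes on precisely the two strata (a) and (d) where $\phi$ degenerates to degree less than $2$ and Silverman's theorem is inapplicable. On the remaining degenerate strata (b) and (c) the map still has degree $2$, and one verifies $\phi^2\notin\widebar{\mathbb{Q}}[x]$ simply by noting that $\phi^{-1}(\{\pm i\})$ consists of finite points. I do not foresee a serious obstacle: the only care required is in the case analysis itself and in checking, stratum by stratum, that $\pm i$ really are (pre)images of finite poles of $\phi^2$ whenever Silverman is invoked.
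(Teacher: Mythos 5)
Your proof is correct but follows a genuinely different route from the paper's. The paper partitions by $t=0$ (where $\beta_P=0$ is a fixed point, so the orbit is $\{0\}$), then $rs=0$ with $t\neq 0$ (where $\phi_{(r,s,t)}$ is bounded on $\mathbb{R}$, hence can take only finitely many integer values), and finally $rst\neq 0$, where it expands $\phi^2=f/g$ explicitly and shows $g\cdot(ax+b)=f$ leads to a contradiction via coefficient matching. You instead argue uniformly via the pole structure: on $\{rs\neq 0,\ (r,t)\neq(1,0)\}$ the relation $\phi(\pm i)=\phi(\infty)=\infty$ forces $\pm i$ to be finite poles of $\phi^2$, and on the degree-$2$ strata $\{s=0,t\neq 0\}$ and $\{r=0,s\neq 0\}$ you note $\phi(\infty)\neq\infty$ but $\phi^{-1}(\{\pm i\})$ still consists of finite points mapped to $\infty$ by $\phi^2$; the two strata where $\phi$ degenerates below degree $2$ are exactly those where $\beta_P=0$ is fixed. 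Your approach trades the paper's explicit but opaque coefficient computation for a cleaner preimage-of-$\infty$ argument, at the mild cost of a slightly finer stratification and of invoking Silverman's theorem for the degree-$2$ maps where the paper gets away with the elementary observation that those maps are real-bounded. Both are complete; yours is arguably more illuminating as to \emph{why} $\phi^2$ fails to be a polynomial, while the paper's handling of the $rs=0$ strata avoids any appeal to Silverman's theorem for those cases.
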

\begin{proof}[(Proof of Lemma \ref{3dim})] Note that if $t=0$, then $\beta_{(r,s,t)}=0$ and $\phi_{(r,s,t)}(0)=0$. In particular, $\Orb_{\phi_{(r,s,t)}}\big(\beta_{(r,s,t)}\big)=\{0\},$ and there is nothing to prove in this case. Therefore, without loss of generality, we may assume that $t\neq0$. Likewise, if $r\cdot s=0$, then $\phi_{(r,s,t)}$ is a bounded function on the real line. In particular, the set of integers of $\Orb_{\phi_{(r,s,t)}}\big(\beta_{(r,s,t)}\big)$ is a finite. 

In the remaining cases, it suffices to show that $\phi_{(r,s,t)}^2(z)\not\in\mathbb{Q}[z]$; see \cite[Theorem 3.43]{Silv-Dyn}. We compute that \vspace{.1cm} 
\[\phi_{(r,s,t)}^2(z):=\frac{f_{(r,s,t)}(z)}{g_{(r,s,t)}(z)}:=\frac{(r^4s^4)z^9 + (3r^3s^4 + rs^2)z^7 + (3r^3s^3t + t)z^6  + \dots + (rst^3 + st + t)}{(r^2s^2)z^8 + (r^2s^2 + 2rs^2 + 1)z^6 + (2rst)z^5 + \dots + (t^2 + 1)}. \vspace{.15cm}\] 
Therefore, if $\phi_{(r,s,t)}^2(z)\in\mathbb{Q}[z]$, then $g_{(r,s,t)}(z)\cdot(az+b)=f_{(r,s,t)}(z)$ for some $a,b\in\mathbb{Q}$. By equating the $z^9$ coefficient, we see that $a:=(st)^2$. Moreover, after substituting $a:=(st)^2$ and examining the $z^8$ coefficient, we see that $b=0$. Similarly, since, $b=0$ and $t\neq0$, the $z^6$ coefficient implies that $(rs)^3=-1$, and we deduce that $(rs)=-1$. In particular, the $z^7$ coefficient implies that $s=-1$. Finally, substituting $s=-1$ in the constant term, we see that $t=0$, a contradiction.              
\end{proof} 
\begin{proof}[(Proof of Proposition \ref{prop:exam})] We begin by establishing a lower bound on $\hat{h}_{\phi_{(r,s,t)}}(\beta_{(r,s,t)})$ for all integers points $(r,s,t)$ in an open subset of $X:=\mathbb{A}^1\times\mathbb{A}^1\times\mathbb{A}^1$. In particular, we see that if $r,s,t\in\mathbb{Z}$ are such that $r\cdot s\cdot t\neq0$, then \vspace{.1cm}  
\begin{equation}{\label{htsubset}}\;\; h(\beta_{(r,s,t)})=h(r^{n_1}\cdot s^{n_2}\cdot t^{n_3})=\log\big(|r^{n_1}|\cdot |s^{n_2}|\cdot |t^{n_3}|\big)\geq\min_{1\leq i\leq3}\{n_i\}\cdot\log\max\big\{|r|,|s|,|t|\big\}. \vspace{.1cm} 
\end{equation} 
Let $U:=\subseteq \mathbb{A}^1\times\mathbb{A}^1\times\mathbb{A}^1$ be the open subset of points $P:=(r,s,t)$ such that $r\cdot s\cdot t\neq0$, and define the height function $h_{X}(P)=\log\max\big\{|r|,|s|,|t|\big\}$. Then, (\ref{htsubset}) implies that \vspace{.15cm} 
\begin{equation}{\label{bdconcise}}
h(\beta_P)\geq \min_{1\leq i\leq3}\{n_i\}\cdot h_X(P), \;\;\;\; \text{for all}\;\, P\in U(\mathbb{Z}); \vspace{.15cm} 
\end{equation}
here $U(\mathbb{Z})$ denotes the set of points of $U$ with integral coordinates. Note that the bound on (\ref{bdconcise}) will not hold on $U(\mathbb{Q})$ in general. On the other hand, it is easy to see that \vspace{.1cm}  
\begin{equation}{\label{htp7}} h(\phi_P):=h_{\mathbb{P}^7}\big([r\cdot s,0,s,t,0,1,0,1]\big)\leq2\cdot\log\max\big\{|r|,|s|,|t|\big\}=2\cdot h_X(P), \vspace{.1cm} 
\end{equation}
for all $P\in X(\mathbb{Z})$. In particular, \cite[Proposition 6(b)]{Silv-Hsia} and the bounds on (\ref{bdconcise}) and (\ref{htp7}) yield \vspace{.05cm} 
\begin{equation}{\label{lbdconical}}
\hat{h}_{\phi_P}(\beta_P)\geq \big(\min_{1\leq i\leq3}\{n_i\}-5\big)\cdot h_X(P)-C,\;\;\;\;\text{for all}\;\, P\in U(\mathbb{Z}); \vspace{.1cm}   
\end{equation} 
here $C$ is an absolute, positive constant. Hence, as in the proof of Theorem \ref{thm:families}, the upper bound in \cite[Corollary 17]{Silv-Hsia} implies that $\#(\Orb_{\phi_P}(\beta_P)\cap\mathbb{Z})\leq M(\phi,\beta)$ is bounded uniformly over all points $P\in U(\mathbb{Z})$. Therefore, it remains to control $\#(\Orb_{\phi_P}(\beta_P)\cap\mathbb{Z})$ for all $P=(r,s,t)$ such that $r\cdot s\cdot t=0$ on average. We do this in cases. 

Suppose first that $t=0$. Then one computes that $\phi_{(r,s,0)}(\beta_{(r,s,0)})=\phi_{(r,s,0)}(0)=0$; hence \vspace{.1cm}
\begin{equation}{\label{bdt}} \#\big(\Orb_{\phi_{(r,s,0)}}(\beta_{(r,s,0)}\big)\cap\mathbb{Z}\big)=1, \;\;\;\;\text{for all}\; r,s\in\mathbb{Z}. \vspace{.1cm} 
\end{equation} 
On the other hand, if $s=0$, then $\phi_{(r,0,t)}(z)=t/(z^2+1)$. Therefore, $|\phi_{(r,0,t)}(z)|\leq |t|$ is a bounded function on the real line. We deduce that, \vspace{.15cm} 
\begin{equation}\label{bds} \#\big(\Orb_{\phi_{(r,0,t)}}(\beta_{(r,0,t)}\big)\cap\mathbb{Z}\big)\leq 2B+1,\;\;\;\;\;\text{when}\;\,\max\{|r|,|t|\}\leq B. \vspace{.15cm}        
\end{equation}
Finally, suppose that $r=0$. Then $\phi_{(0,s,t)}(z)=(sz+t)/(z^2+1)$ is also a bounded function on the real line. Specifically, $|\phi_{(0,s,t)}(z)|\leq |s|+|t|$ follows from elementary calculus. Therefore, \vspace{.15cm} 
\begin{equation}\label{bdr} \#\big(\Orb_{\phi_{(0,s,t)}}(\beta_{(0,s,t)}\big)\cap\mathbb{Z}\big)\leq 4B+1,\;\;\;\;\;\text{when}\;\,\max\{|s|,|t|\}\leq B. \vspace{.15cm}       
\end{equation}
We deduce from (\ref{bdt}), (\ref{bds}), and (\ref{bdr}) that \vspace{.25cm} 
\begin{equation*}
\begin{split} 
&\frac{\sum_{|r|,|s|,|t|\leq B}\#\Big(\Orb_{\phi_{(r,s,t)}}\big(\beta_{(r,s,t)}\big)\cap\mathbb{Z}\Big)}{(2B+1)^3}=\frac{\sum_{P\in U(\mathbb{Z},B)}\#\Big(\Orb_{\phi_{P}}\big(\beta_{P}\big)\cap\mathbb{Z}\Big)}{(2B+1)^3} + \dots \\[10pt]
&+\,\,\;\;\;\;\frac{\sum_{|r|,|t|\leq B}\#\Big(\Orb_{\phi_{(r,0,t)}}\big(\beta_{(r,0,t)}\big)\cap\mathbb{Z}\Big)}{(2B+1)^3} \,+\, \frac{\sum_{|r|,|s|\leq B}\#\Big(\Orb_{\phi_{(r,s,0)}}\big(\beta_{(r,s,0)}\big)\cap\mathbb{Z}\Big)}{(2B+1)^3}\\[10pt]
&\leq\;\; \frac{\sum_{P\in U(\mathbb{Z},B)}M(\phi,\beta)}{(2B+1)^3} \,+\, \frac{\sum_{|s|,|t|\leq B}(4B+1)}{(2B+1)^3} \,+\, \frac{\sum_{|r|,|t|\leq B}(2B+1)}{(2B+1)^3} \,+\, \frac{\sum_{|r|,|s|\leq B}1}{(2B+1)^3}\\[10pt]
\end{split} 
\end{equation*}  
Letting $B$ tend to infinity, we see that 
\begin{equation*} \widebar{\Avg}_{\mathbb{Z}}(\phi,\beta,\mathbb{Z})\leq M(\phi,\beta)+3,
\end{equation*} 
and the average number of integral points  is bounded as claimed; here, $M(\phi,\beta)$ is the bound on $\#(\Orb_{\phi_{P}}\big(\beta_{P}\big)\cap\mathbb{Z})$ for all $P\in U(\mathbb{Z})$ obtained from (\ref{lbdconical}) and \cite[Corollary 17]{Silv-Hsia}.            
\end{proof} 
\indent \textbf{Acknowledgements:} It is a pleasure to thank Joseph Gunther, Bjorn Poonen, Joseph Silverman, Martin Widmer, Wei Pin Wong, and Umberto Zannier for the discussions related to the work in this paper. Finally, we thank the anonymous referee for pointing out how to use \cite[Theorem 4.1]{Call-Silverman} to get rid of some technical assumptions in an early draft of this paper.   
\end{section}  

\vspace{12mm} 
\indent\indent Wade Hindes, Department of Mathematics, CUNY Graduate Center, 365 Fifth Avenue, New York, New York 10016-4309.\\
\indent \emph{E-mail address:} \textbf{whindes@gc.cuny.edu}
\end{document}